\title{On the structure of extremal point-line arrangements}
\author{Gabriel Currier \footnote{Department of Mathematics, University of British Columbia, Vancouver, BC, Canada. Emails: currierg@math.ubc.ca, solymosi@math.ubc.ca}
\and Jozsef Solymosi \footnotemark[1]
\and Hung-Hsun Hans Yu \footnote{Department of Mathematics, Princeton University, Princeton, NJ 08544. Email: hansonyu@princeton.edu}}
\date{\today}
\newtheorem{theorem}{Theorem}[section]
\newtheorem{proposition}[theorem]{Proposition}
\newtheorem{definition}[theorem]{Definition}
\newtheorem{corollary}[theorem]{Corollary}
\providecommand{\RR}{\mathbb{R}}
\providecommand{\CC}{\mathbb{C}}
\providecommand{\cK}{\mathcal{K}}
\crefname{equation}{}{}
\begin{document}

\maketitle
\begin{abstract}
    In this note, we show that extremal Szemer\'edi--Trotter configurations are rigid in the following sense: If $P,L$ are sets of points and lines determining at least $C|P|^{2/3}|L|^{2/3}$ incidences, then there exists a collection $P'$ of points of size at most $k = k_0(C)$ such that, heuristically, fixing those points fixes a positive fraction of the arrangement. That is, the incidence structure and a small number of points determine a large part of the arrangement.
    The key tools we use are the Guth--Katz polynomial partitioning, and also a result of Dvir, Garg, Oliveira and Solymosi that was used to show the rigidity of near-Sylvester--Gallai configurations.
\end{abstract}
\section{Introduction}

Let $P$ be a set of points and $L$ a set of lines in $\RR^2$. A pair $(p,\ell) \in P \times L$ is defined to be an \emph{incidence} if the point $p$ is on the line $\ell$, and we let $I(P,L)$ denote the set of incidences between $P$ and $L$. The following classical result of Szemer\'edi and Trotter gives a bound on the maximum number of incidences among $n$ points and $m$ lines.

\begin{theorem}[Szemer\'edi--Trotter, 1984]\label{thm:szt}
Let $P$ be a set of $n$ points and $L$ be a set of $m$ lines in $\RR^2$. Then,  $$|I(P,L)| \lesssim n^{2/3}m^{2/3} + n + m \footnotemark[3]\footnotetext[3]{We say that $A \lesssim B$ if there exists a constant $C$ such that $A \le CB$, and $A \approx B$ if $A \lesssim B$ and $B \lesssim A$. The constants are assumed to be absolute unless otherwise specified.}$$
\end{theorem}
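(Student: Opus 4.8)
The plan is to run Székely's crossing-number argument, which converts the incidence bound into an application of the crossing-number inequality. First I would handle the two lower-order terms by a pruning step: any line of $L$ passing through at most one point of $P$ accounts for at most one incidence, so deleting all such lines costs at most $m$ incidences in total, and we may therefore assume every $\ell \in L$ contains at least two points of $P$. Writing $k_\ell = |\{p \in P : p \in \ell\}|$, we then have $|I(P,L)| = \sum_{\ell \in L} k_\ell$ for the pruned configuration, and it suffices to bound this by $O(n^{2/3}m^{2/3} + n)$.

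Next I would build a (simple, topological) graph $G$ on vertex set $P$: for each surviving line $\ell$, order its $k_\ell \ge 2$ points along $\ell$ and join consecutive points by the segment of $\ell$ between them. Then $|V(G)| = n$ and $|E(G)| = \sum_\ell (k_\ell - 1) \ge \tfrac12 \sum_\ell k_\ell = \tfrac12 |I(P,L)|$, using $k_\ell - 1 \ge k_\ell/2$ for $k_\ell \ge 2$; note also that $G$ has no multi-edges, since two points determine at most one line. The crucial geometric input is that two distinct lines meet in at most one point, so all edges drawn on $\ell_1$ together with all edges drawn on $\ell_2$ contribute at most one crossing, while edges drawn on the same line never cross; hence $\operatorname{cr}(G) \le \binom{m}{2} < m^2/2$.

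Now I would invoke the crossing-number inequality: any simple graph with $V$ vertices and $E \ge 4V$ edges satisfies $\operatorname{cr}(G) \gtrsim E^3/V^2$. If $|E(G)| < 4n$, then $|I(P,L)| \le 2|E(G)| < 8n$ and we are done. Otherwise $E^3/n^2 \lesssim \operatorname{cr}(G) < m^2/2$, which rearranges to $|E(G)| \lesssim n^{2/3}m^{2/3}$ and hence $|I(P,L)| \le 2|E(G)| \lesssim n^{2/3}m^{2/3}$. Adding back the at most $m$ incidences lost in the pruning step yields $|I(P,L)| \lesssim n^{2/3}m^{2/3} + n + m$.

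The one genuinely nontrivial ingredient is the crossing-number inequality itself, which I would cite rather than reprove (its own proof goes through the Euler-formula bound $\operatorname{cr}(G) \ge E - 3V + 6$ for simple graphs plus a random-sampling amplification); everything else is bookkeeping, the only subtlety being to keep track of the fact that the edge-count and the $\binom{m}{2}$ crossing bound refer to the pruned configuration. An alternative fitting the tools used later in this note would be a Guth--Katz polynomial partitioning proof: take a partitioning polynomial of degree $\approx D$, so its zero set cuts $\RR^2$ into $O(D^2)$ open cells each containing $\lesssim n/D^2$ points; bound the incidences within each cell by the trivial estimate together with induction on $|L|$, bound the incidences on the zero set using that a line not contained in it meets it in at most $D$ points (and that few lines can lie in a degree-$D$ curve), and optimize over $D$ to recover the same exponent. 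I expect the crossing-number route to be the cleanest to write down in full.
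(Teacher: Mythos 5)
Your argument is correct, but note that the paper does not prove this statement at all: it is quoted as the classical Szemer\'edi--Trotter theorem and used as a black box, so there is no internal proof to compare against. What you have written is the standard Sz\'ekely crossing-number proof, and it is sound as stated: the pruning step correctly isolates the $+m$ term; the graph on $P$ with consecutive-point segments as edges is simple (two points lie on a unique line) and is properly drawn (no edge passes through a third vertex, since consecutive points have no point of $P$ between them); the drawing has at most $\binom{m}{2}$ crossings because distinct lines meet at most once and edges on the same line do not cross; and the case split on $|E(G)| < 4n$ versus $|E(G)| \ge 4n$ together with the crossing-number inequality $\mathrm{cr}(G) \gtrsim E^3/V^2$ gives $|I(P,L)| \lesssim n^{2/3}m^{2/3} + n + m$ after adding back the pruned incidences. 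Citing the crossing-number inequality rather than reproving it is reasonable, exactly as the paper itself cites Szemer\'edi--Trotter. Your alternative sketch via polynomial partitioning would also work and is closer in spirit to the tools (Theorem 2.7) the paper actually uses later, but the crossing-number route is the cleaner self-contained argument here.
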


The ``interesting'' range is considered to be $m^{1/2} \lesssim n \lesssim m^2$, where the $n^{2/3}m^{2/3}$ term dominates. Outside of this range, the proof of the theorem is quite straightforward, and the extremal configurations are essentially trivial. Despite a wealth of literature and many applications of \cref{thm:szt}, very little is known about the configurations of points and lines that achieve $\approx n^{2/3}m^{2/3}$ incidences. We direct the interested reader to \cite{Elekes1997,Elekes1999,PetridisRocheNewtonRudnevWarren2022,RudnevShkredov2022,ShefferSilier2023,Solymosi2006} for an introduction to this area. 

Recently, Katz and Silier \cite{KatzSilier2023} proved a structural result about configurations of $n$ points and $n$ lines determining $\approx n^{4/3}$ incidences. They showed that such configurations must admit a particular type of cell decomposition, and used this to show that any such configuration can be encoded using only $n^{1/3}$ parameters. 
Our main result in this paper is that given the additional information of the incidence structure $I(P,L)$ of such a configuration, a constant number of parameters, in fact, suffices to describe the entire arrangement. We will make this rigorous using the following notion of rigidity.

\begin{definition}
Let $T$ be a finite collection of triples in some finite ground sets $S$, and let $\mathcal{K}_T\subseteq (\CC^2)^S$ be the variety consisting of assignments $(p_s)_{s\in S}$ where each $p_s$ is a point in $\CC^2$, so that if $\{s_1,s_2,s_3\}\in T$, then $p_{s_1},p_{s_2},p_{s_3}$ are collinear.
Let $P = (p_s)_{s\in S} \in \cK_T$.
We say that $P$ is $r$-rigid with respect to $T$ if the dimensions of the irreducible components through $P$ in $\cK_T$ are at most $r$.
\end{definition}

Similarly, we say a configuration of points and lines $(P,L)$ is $r$-rigid if $P$ is $r$-rigid with respect to $T_L$ where $T_L\subseteq \binom{P}{3}$ is the collection of collinear triples determined by $L$. One can think of rigidity in this sense as the number of ``degrees of freedom'' that a given collection of points has, given certain collinearity conditions. Projective transformations preserve collinearity and provide $8$ natural degrees of freedom, and thus, any configuration is at least $8$-rigid. This definition of rigidity was introduced in \cite{DvirGargOliveiraSolymosi2018}, where they showed that point configurations containing many collinear triples are rigid. Our goal is to prove the following theorem: that is, asymptotically sharp Szemer\'edi--Trotter configurations are rigid. 

\begin{theorem}\label{thm:main}
For every constant $\Delta>0$, there exists a constant $C_1\geq 1$ depending only on $\Delta$ such that the following is true.
Let $P$ be a set of $n$ points and $L$ a set of $m$ lines in $\RR^2$, with $m \le n^{2}/C_1$ and $n\leq m^2/C_1$. Furthermore, suppose that $P$ and $L$ determine at least $\Delta n^{2/3}m^{2/3}$ incidences.
Then there exists a subset $P' \subset P$ of size $\Omega_{\Delta} (n)$ such that $(P',L)$ is $O_{\Delta}(1)$-rigid.
\end{theorem}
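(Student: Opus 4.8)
The plan is to pass to a large, heavily over-determined sub-configuration, organize it with polynomial partitioning in the spirit of Katz--Silier, and then feed the resulting collinearity design into the rigidity machinery of \cite{DvirGargOliveiraSolymosi2018}.

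First I would regularize while keeping a constant fraction of the points, using the Szemer\'edi--Trotter bound itself. Discarding lines with fewer than $t_0 := \Omega_\Delta(n^{2/3}m^{-1/3})$ points of $P$ costs at most $t_0 m$ incidences, which is a small fraction of $\Delta n^{2/3}m^{2/3}$ when $C_1=C_1(\Delta)$ is large, since $n\le m^2/C_1$; call the surviving rich lines $L'$. Then let $P_1\subseteq P$ be the points lying on at least $s_0 := \Omega_\Delta(n^{-1/3}m^{2/3})$ lines of $L'$; points outside $P_1$ contribute at most $s_0 n$ incidences to $L'$, again a small fraction, so $|I(P_1,L')| = \Omega_\Delta(n^{2/3}m^{2/3})$. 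The decisive point is that $|P_1| = \Omega_\Delta(n)$: by \cref{thm:szt}, $|I(P_1,L')| \lesssim |P_1|^{2/3}m^{2/3} + |P_1| + m$, and the hypotheses $m\le n^2/C_1$, $n\le m^2/C_1$ make the last two terms negligible compared with $n^{2/3}m^{2/3}$, forcing $|P_1|\gtrsim_\Delta n$. The same inequalities make $s_0,t_0$ arbitrarily large constants (in terms of $\Delta$), so every point of $P_1$ lies in $\gtrsim s_0 t_0^2 = \Omega_\Delta(n)$ collinear triples of $L'$, and a convexity estimate on $\sum_{\ell\in L'}\binom{|\ell\cap P_1|}{3}$ gives $\Omega_\Delta(n^2)$ collinear triples inside $P_1$.

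Next I would apply the Guth--Katz polynomial partitioning to $P_1$ at a large constant degree $D=D(\Delta)$. At most $D$ lines of $L'$ lie in the zero set, and every other line meets it in at most $D$ points and hence crosses at most $D+1$ cells, so the incidences lost on the zero set total $O(Dm) = o_\Delta(n^{2/3}m^{2/3})$. Summing the per-cell Szemer\'edi--Trotter bounds via H\"older is essentially tight, so the surviving incidences force equidistribution: a positive fraction of the cells each carry $\Theta_\Delta(n/D^2)$ points, $\Theta_\Delta(m/D)$ relevant lines, and $\Theta_\Delta(n^{2/3}m^{2/3}/D^{4/3})$ incidences, i.e.\ each such cell is again a Szemer\'edi--Trotter-extremal configuration, and the $t_0$-rich lines, which cross many cells, link these cells together. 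This is where the extremality hypothesis, rather than a mere Szemer\'edi--Trotter upper bound, is genuinely used, and it is the analogue for us of the Katz--Silier cell decomposition. From this one extracts the final set $P'\subseteq P_1$ with $|P'| = \Omega_\Delta(n)$ together with a collinearity design on $P'$ in which every point lies on many rich lines belonging to many distinct pencils (ruling out single-line or single-pencil degeneracies) and in which the cell-plus-linking-line structure makes the point--line incidence hypergraph connected enough to ``propagate'' --- fixing $O_\Delta(1)$ points should force, through repeated use of ``two points determine a line, two lines determine a point'', all of $P'$ up to finitely many choices. The quantitative form of this implication is exactly the rigidity estimate of \cite{DvirGargOliveiraSolymosi2018}, which converts such a robustness/expansion condition on a collinearity design into a bound on the dimension of the irreducible components of $\cK_{T}$; applied to $T = T_L$ restricted to $P'$, it yields that those components through $P'$ have dimension $O_\Delta(1)$, i.e.\ $(P',L)$ is $O_\Delta(1)$-rigid.

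The main obstacle is precisely the input condition to this last step, and the tension is already visible in the first step: a naive peeling that forces every retained point to lie in many collinear triples within the retained set can collapse the point set to $o(n)$ points, and in any case ``many triples'' is not enough for rigidity --- many collinear points have abundant triples yet are maximally flexible. The real work is to use the equidistribution produced by the polynomial partition to show that one can simultaneously keep $\Omega_\Delta(n)$ points and a collinearity structure of the right, non-degenerate, propagation-enabling type demanded by the Dvir--Garg--Oliveira--Solymosi result; verifying this robustness condition, and pinning down the exact form in which it is needed, is the crux of the argument.
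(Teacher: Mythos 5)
Your toolbox (polynomial partitioning plus the Dvir--Garg--Oliveira--Solymosi rigidity theorem) is the right one, but the two steps that make the argument actually work are missing, and the step you yourself flag as the ``crux'' is exactly where your plan breaks. A direct application of \cref{thm:solydvir} to a global set $P'$ of $\Omega_\Delta(n)$ points cannot give $O_\Delta(1)$-rigidity, because the bound there is $\lfloor 8tn/(4t+k)\rfloor$ and the global statistics are wrong: if you use consecutive collinear triples (so $t=2$), their total number is at most the number of incidences, about $n^{2/3}m^{2/3}$, which for instance when $m\approx n$ is $n^{4/3}\ll n^2$, so $k\approx n^{1/3}$ and the bound is about $n^{2/3}$; if instead you use all collinear triples, then $t$ grows like the richness of the lines and the bound is again polynomial in $n$. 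So ``many triples'' genuinely is not enough, as you suspected, and constant-degree partitioning does not repair this: with $D=O_\Delta(1)$ a typical cell still has $\Theta(n/D^2)$ points but only $O(n^{2/3}m^{2/3}/D^2)$ consecutive triples, still far from quadratic in the cell size in most of the admissible range, so the per-cell rigidity constant blows up with $n$.

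The paper resolves this with two ideas absent from your proposal. First, the partitioning degree is not a constant: one takes $D\approx \Delta n^{2/3}/m^{1/3}$, so that a typical cell has only $\approx m^{2/3}/n^{1/3}$ points while a good cell carries $\approx m^{4/3}/n^{2/3}$ consecutive in-cell collinear triples --- quadratically many in the cell's \emph{own} size --- and then \cref{cor:coltrip} (which is \cref{thm:solydvir} with $t=2$ after a pruning step) makes a large subset of each good cell $r_0$-rigid with $r_0=O_\Delta(1)$. Second, rigidity of a constant number of cells is propagated to a positive fraction of the whole configuration not by reapplying the rigidity theorem but combinatorially: one selects $k=O_\Delta(1)$ good cells whose associated line sets are essentially disjoint (Claim 1 in the paper), and two rounds of Szemer\'edi--Trotter bootstrapping show that the lines determined by those cells, together with the incidence structure alone, uniquely determine $\Omega_\Delta(m)$ lines and hence $\Omega_\Delta(n)$ points; \cref{prop:fullrigid} then converts ``each of $k$ cells is $r_0$-rigid and everything else is uniquely determined by them'' into $kr_0$-rigidity of the large point set. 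Your proposal gestures at this propagation (``two points determine a line, two lines determine a point'') but supplies no mechanism to make it quantitative, and it is precisely this unique-determination argument, rather than any strengthened hypothesis fed into the Dvir--Garg--Oliveira--Solymosi theorem, that closes the gap you identified.
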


We view this result as a step toward characterizing all near-optimal configurations of the Szemer\'edi--Trotter theorem. 
We hope to eventually be able to recover a large portion of the configurations even without the full data of the incidence structure, which would be one path towards characterizing near-optimal configurations.

Our proof strategy is as follows.
First, using the Guth--Katz polynomial partitioning theorem \cite{GuthKatz15}, we divide the configurations into many cells with a polynomial.
Then, we apply a result of Dvir, Garg, Oliveira and Solymosi \cite{DvirGargOliveiraSolymosi2018} to show that most cells are rigid.
Lastly, we show that a small number of cells already determine a large part of the configuration.
A more detailed proof strategy is outlined at the beginning of \cref{sec:main-proof}.

\section{Preliminaries}

We will need a few preliminary results before commencing with the proof of \cref{thm:main}. The first is an auxiliary lemma we will need to easily work with the notion of rigidity.

\begin{proposition}\label{prop:fullrigid}
Let $S$ be a finite set and $S_1,\ldots, S_{\ell}$ be some disjoint subsets of $S$.
Let $T\subseteq \binom{S}{3}$ be a collection of triples, and let $P=(p_s)_{s\in S}\in \cK_T$ be a collection of points in $\CC^2$.
For each $i\in [\ell]$, let $T_i= T\cap \binom{S_i}{3}$ and $P_i = (p_s)_{s\in S_i}$.
Suppose that $P_i$ is $r$-rigid with respect to $T_i$ for each $i$, and that the only $P'\in \cK_{T}\subseteq (\CC^2)^{S}$ with $P_i' = P_i$ for all $i\in[\ell]$ is $P'=P$.
Then, $P$ is $(\ell r)$-rigid with respect to $T$.
\end{proposition}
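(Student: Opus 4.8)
The plan is to analyze the coordinate projection
$\pi \colon (\CC^2)^S \to \prod_{i=1}^{\ell}(\CC^2)^{S_i}$
that restricts an assignment $(p_s)_{s\in S}$ to the coordinates indexed by $S_1\cup\cdots\cup S_\ell$ (identifying $(\CC^2)^{S_1\cup\cdots\cup S_\ell}$ with $\prod_i (\CC^2)^{S_i}$ since the $S_i$ are disjoint). I would show that, on any irreducible component $Z$ of $\cK_T$ through $P$, the map $\pi$ is dominant onto an irreducible variety of dimension at most $\ell r$, while the fiber over $\pi(P)$ is the single point $\{P\}$; the fiber-dimension theorem then forces $\dim Z\le \ell r$.

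In more detail: let $Z$ be an irreducible component of $\cK_T$ containing $P$. Since every collinearity constraint in each $T_i$ is among the constraints defining $\cK_T$, we have $\pi(\cK_T)\subseteq \prod_i \cK_{T_i}$, so $W:=\overline{\pi(Z)}$ is an irreducible closed subvariety of $\prod_i \cK_{T_i}$ containing $\pi(P)=(P_1,\dots,P_\ell)$. First I would bound $\dim W$: for each $i$, the image $\pi_i(W)$ of $W$ under the $i$-th factor projection is irreducible and constructible, so its closure is an irreducible subvariety of $\cK_{T_i}$ containing $P_i$, hence lies inside an irreducible component of $\cK_{T_i}$ through $P_i$; by the assumed $r$-rigidity of $P_i$ with respect to $T_i$, that component has dimension at most $r$. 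As $W\subseteq \prod_i \overline{\pi_i(W)}$, we get $\dim W\le \sum_i \dim\overline{\pi_i(W)}\le \ell r$.

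Next I would use the fiber hypothesis. The restriction $\pi|_Z\colon Z\to W$ is a dominant morphism of irreducible varieties (dominant by construction of $W$ as the closure of the image), so by the theorem on the dimension of fibers of a dominant morphism, every nonempty fiber has dimension at least $\dim Z-\dim W$. The fiber over $\pi(P)$ equals $\{P'\in Z : P'_i=P_i \text{ for all } i\in[\ell]\}$, which by assumption is exactly $\{P\}$: nonempty and $0$-dimensional. Hence $0\ge \dim Z-\dim W$, so $\dim Z\le \dim W\le \ell r$. Since $Z$ was an arbitrary irreducible component of $\cK_T$ through $P$, this shows $P$ is $(\ell r)$-rigid with respect to $T$.

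The main point requiring care — really the only nonroutine step — is the invocation of the fiber-dimension theorem, which I am using in the ``reverse'' direction: it guarantees a lower bound on the dimension of every fiber of a dominant morphism of irreducible varieties over an algebraically closed field, and a small fiber then forces the source to be small. One must check that $\pi|_Z$ is genuinely dominant onto the irreducible $W$ (immediate) and that $\cK_T$ and the $\cK_{T_i}$ are honest varieties, which holds since collinearity of three points in $\CC^2$ is cut out by the vanishing of a single $3\times 3$ determinant. The remaining manipulations — stability of irreducibility under images, a set lying in the product of its projections, additivity of dimension over products — are standard.
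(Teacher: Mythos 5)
Your proposal is correct and follows essentially the same route as the paper: project each component $Z$ through $P$ to the coordinates indexed by $S_1\cup\cdots\cup S_\ell$, bound the dimension of the image closure by $\ell r$ using the factorwise rigidity, and use the fiber-dimension theorem together with the singleton fiber over $\pi(P)$ to conclude $\dim Z\le \ell r$. The only cosmetic difference is that the paper phrases the fiber bound via generic fiber dimension plus upper semicontinuity on the source, while you invoke the lower bound on fiber dimension for a dominant morphism directly; these are the same underlying result.
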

\begin{proof}
    Let $\Tilde{S}$ be the union of $S_1,\ldots, S_{\ell}$ and $\Tilde{T}$ be the union of $T_1,\ldots, T_{\ell}$ that can be thought of as triples in $\Tilde{S}$.
    Then $\mathcal{K}_{\Tilde{T}} = \prod_{i=1}^{\ell}\mathcal{K}_{T_i}$.
    Let $\pi:(\CC^2)^S\to (\CC^2)^{\Tilde{S}}$ be the canonical projection.
    
    For any component $V$ of $\cK_T$ through $P$, we would like to compare the dimension of $V$ to the dimension of $\overline{\pi(V)}$.
    Note that there is some dense Zariski open $U\subseteq \overline{\pi(V)}$ such that $(\pi|_V)^{-1}(P')$ is of pure dimension $\dim V-\dim \overline{\pi(V)}$ for every $P'\in U$.
    By the upper semicontinuity of the dimensions of fibers on the source, we know that for every $P'\in V$, the fiber of $\pi|_V$ above $\pi(P')$ is locally of dimension at least $\dim V-\dim \overline{\pi(V)}$ at $P'$.
    However, the fiber of $\pi|_V$ above $\pi(P)$ is a single point by the assumption, showing that $\dim V-\dim \overline{\pi(V)}\leq 0$.
    Therefore $\dim V = \dim \overline{\pi(V)}$.
    Since $\overline{\pi(V)}\subseteq \mathcal{K}_{\Tilde{T}} = \prod_{i=1}^{\ell}\mathcal{K}_{T_i}$ is irreducible and contains $(P_1,\ldots,P_\ell)$, we have that $\dim \overline{\pi(V)}\leq \ell r$.
    This shows the claim.
\end{proof}

The next proposition allows us to deduce the rigidity of points from the rigidity of lines.

\begin{proposition}\label{prop:dualrigidity}
    Let $P$ be a set of points, $L$ be a set of lines in $\CC^2$ so that every point in $P$ lies on at least two lines in $L$.
    Let $r$ be a positive integer, and $T^*$ be the concurrent triplets of $L$ determined by $P$.
    Note that by taking the dual with respect to a general point in $\CC^2$, we can map $L$ to a collection of points $d(L)$ in $\CC^2$.
    If $d(L)$ is $r$-rigid with respect to $T^*$, then $(P,L)$ is $r$-rigid as well.
\end{proposition}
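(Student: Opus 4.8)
The plan is to exhibit, near the given configuration, a birational identification of $\cK_{T_L}$ with a subvariety of $\cK_{T^*}$, which will force the two varieties to have matching local dimensions. Take the dual $d$ with respect to a general point $o\in\CC^2$ as in the statement, chosen so that $o$ lies on no line of $L$ and is not a point of $P$; recall that $d$ is a birational involution interchanging points and lines of $\CC^2$, preserving incidence, and hence carrying concurrent triples of lines to collinear triples of points and conversely. Regard $P$ as the ground set of $\cK_{T_L}\subseteq(\CC^2)^P$ and $L$ as the ground set of $\cK_{T^*}\subseteq(\CC^2)^L$. Since each triple of $T^*$ consists of lines concurrent at a point of $P$ --- hence not at $o$ --- the tuple $d(L):=(d(\ell))_{\ell\in L}$ lies in $\cK_{T^*}$. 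It suffices to show that every irreducible component $V$ of $\cK_{T_L}$ through $P$ satisfies $\dim V\le r$.

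First I would construct a rational map $\Phi\colon\cK_{T^*}\dashrightarrow\cK_{T_L}$ recovering points as intersections of dual lines: set $\Phi\big((q_\ell)_\ell\big)=(p_s)_s$, where $p_s$ is the common point of the lines $d(q_\ell)$ taken over all $\ell$ incident to $p_s$ in $(P,L)$. Because every point of $P$ lies on at least two lines of $L$, there are always at least two such lines; at $(q_\ell)_\ell=d(L)$ these are the distinct lines of $L$ through $p_s$ and so meet in exactly one point; and ``the chosen lines meet in exactly one point'' is a Zariski-open condition. Hence $\Phi$ is defined on an open set containing $d(L)$, and evidently $\Phi(d(L))=P$. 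Its image lies in $\cK_{T_L}$: if $\{s_1,s_2,s_3\}\in T_L$ is witnessed by a line $\ell$ of $L$ through all three, then $p_{s_1},p_{s_2},p_{s_3}$ all lie on $d(q_\ell)$ and so are collinear. This is the only place where the degree-at-least-two hypothesis enters, and nothing is assumed about the number of points on a line.

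Next I would produce a rational \emph{section} $\sigma\colon\cK_{T_L}\dashrightarrow\cK_{T^*}$ defined near $P$: for each line $\ell$ incident to at least two points of $P$ fix two such points $u(\ell),v(\ell)$ and let the $\ell$-coordinate of $\sigma\big((p_s)_s\big)$ be $d$ of the line through $p_{u(\ell)}$ and $p_{v(\ell)}$; for $\ell$ incident to exactly one point $p_s$ let it be $d$ of the line through $p_s$ parallel to the original $\ell$; for $\ell$ incident to no point let it be the constant $d(\ell)$. This is a rational map on the open locus where the fixed pairs are distinct points --- which contains $P$ --- and $\sigma(P)=d(L)$ by inspection. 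On $\cK_{T_L}$ the collinearity relations of $T_L$ force every point incident to a line $\ell$ to lie on the corresponding reconstructed line, so each reconstructed line passes through all of its incident points; since every point has at least two incident lines, which are distinct near $P$, intersecting them recovers each $p_s$, i.e.\ $\Phi\circ\sigma=\mathrm{id}$ near $P$. Finally $\sigma$ maps $\cK_{T_L}$ (near $P$) into $\cK_{T^*}$: if $\{\ell_1,\ell_2,\ell_3\}\in T^*$ is witnessed by a point $p$, the three reconstructed lines all pass through the perturbed copy of $p$, hence are concurrent, so their $d$-images are collinear.

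A dimension count in the spirit of the proof of \cref{prop:fullrigid} then finishes the argument. Fix a component $V$ of $\cK_{T_L}$ through $P$, and let $U\subseteq V$ be the dense open subset (containing $P$) on which $\sigma$ is defined, takes values in $\cK_{T^*}$, and is left-inverted by $\Phi$. Then $\sigma$ restricts to a birational map from $U$ onto its closure $\overline{\sigma(U)}$, with rational inverse $\Phi$, so $\dim\overline{\sigma(U)}=\dim U=\dim V$; moreover $\overline{\sigma(U)}$ is an irreducible closed subset of $\cK_{T^*}$ containing $d(L)$, hence is contained in some irreducible component $W$ of $\cK_{T^*}$ through $d(L)$, and $\dim W\le r$ by hypothesis. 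Therefore $\dim V=\dim\overline{\sigma(U)}\le\dim W\le r$, as required. I expect the only delicate point to be the handling of lines incident to fewer than two points of $P$: such lines contribute freely to $\sigma$ but must still be reconstructed consistently by $\Phi$ and respect the constraints of $T^*$ --- which works because a line with a single incident point can occur in a concurrence only at that very point --- together with the routine verification that each genericity condition invoked (distinctness of the relevant points, transversality of the relevant line intersections, regularity of $d$) holds at the original configuration and hence on a dense open subset of $V$.
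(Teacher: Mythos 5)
Your proposal is correct and follows essentially the same route as the paper: construct a rational map reconstructing points as intersections of dual lines and a rational section reconstructing dual lines from pairs of incident points (with generic auxiliary data for lines meeting at most one point), verify the section lands in $\cK_{T^*}$ and is left-inverted near $P$, and transfer the dimension bound componentwise. The only cosmetic difference is that the paper's point-reconstruction map uses just two fixed lines per point (so its domain condition is simpler), whereas you intersect all incident dual lines; both work.
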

\begin{proof}
    For any point $p$, let $d(p)$ be its dual line, and let $d(\ell)$ be the dual point of any line $\ell$.
    For any point $p\in P$, choose two lines $\ell_{p,1}$ and $\ell_{p,2}$ in $L$ that pass through it.
    Consider the rational map $f:(\CC^2)^{L}\dashrightarrow (\CC^2)^{P}$ defined as 
    \[f((x_{\ell})_{\ell\in L}) = \left(d\left(x_{\ell_{p,1}}\right)\cap d\left(x_{\ell_{p,2}}\right)\right)_{p\in P}.\]
    It is clear that $d(L)$ is in the domain of $f$ and $f(d(L)) = P$.
    
    We can also consider another rational map $g:(\CC^2)^{P}\dashrightarrow (\CC^2)^{L}$ where $g((y_p)_{p\in P})$ is defined as follows.
    For every line $\ell\in L$, fix two general points $u_{\ell},v_{\ell}$ in $\ell$.
    For every line $\ell\in L$ that passes through at least two points $P$, choose two points $p_{\ell,1}, p_{\ell,2}$ in $P$ lying on $L$ and set $z_{\ell,1} = y_{p_{\ell,1}}$ and $z_{\ell,2} = y_{p_{\ell,2}}$.
    For $\ell\in L$ that passes through exactly one point in $P$, let $p_{\ell,1}$ be that unique point, $z_{\ell,1} = y_{p_{\ell,1}}$ and $z_{\ell,2} = v_{\ell}$.
    Finally, if there are no points on $\ell$ in $P$, then set $z_{\ell,1} = u_{\ell}, z_{\ell,2} = v_{\ell}$.
    We now define $g((y_p)_{p\in P})_{\ell}$ as the dual point of the line connecting $z_{\ell,1}$ and $z_{\ell,2}$.
    If $u_{\ell}$ and $v_{\ell}$ are chosen appropriately, then for $(y_p)_{p\in P}=P$ we have $z_{\ell,1}\neq z_{\ell,2}$ and thus $P$ lies in the domain of $g$.
    Moreover, it is clear that $g(P) = d(L)$.

    Let $T\subseteq \binom{P}{3}$ be the collinear triplets determined by $L$.
    We now show that for any $y$ in $\cK_T$ and also in the domain of $g$, we have $g(y)\in \cK_{T^*}$ and $f(g(y))=y$.
    To check this, it suffices to check that for every $\ell_1,\ell_2,\ell_3\in L$ passing through $p\in P$, we have $g(y)_{\ell_1}, g(y)_{\ell_2}, g(y)_{\ell_3}$ represent three concurrent lines meeting each other at $y_p$.
    Now suppose that for each $i=1,2,3$, we need to show that $y_p$ lies on $d\left(g(y)_{\ell_i}\right)$.
    If $p\in \{p_{\ell_i,1},p_{\ell_i,2}\}$, then there is nothing to check.
    Otherwise, we must have that $p_{\ell_i,1},p_{\ell_i,2}\in P$, and so $y_p, y_{p_{\ell_i,1}}, y_{p_{\ell_i,2}}$ are collinear as $y\in \cK_T$, showing that $y_p$ is on the line $d\left(g(y)_{\ell_i}\right)$ as well.
    
    Finally, for every irreducible component $V$ of $\cK_{T}$ that passes through $P$, we know that $g$ is a rational map on $V$ as well because $P\in V$, and thus $\overline{g(V)}$ lies in some irreducible component $U$ in $\cK_{T^*}$ passing through $d(L)$.
    As $f\circ g$ is identity on $V$, we have that  $\dim V=\dim \overline{f(g(V))}\leq \dim \overline{g(V)}\leq \dim U\leq r$ by the rigidity of $d(L)$.
    This shows that $P$ is $r$-rigid with respect to $T$ as well, as desired.
\end{proof}

Next, we will need the following (special case of a) result of Dvir, Garg, Oliveira and Solymosi \cite[Theorem 4.1]{DvirGargOliveiraSolymosi2018}, which gives the rigidity of point-line configurations containing many collinear triple lines.

\begin{theorem}[Dvir--Garg--Oliveira--Solymosi]\label{thm:solydvir}
Let $S$ be a finite set, $P=(p_s)_{s\in S}$ be a configuration of points $\RR^2$  indexed by $S$, and $T$ a collection of triples in $S$. 
Suppose furthermore that $P$ is a non-singular point of $\mathcal{K}_T$ and that:
\begin{enumerate}
\item Each $s\in S$ is in at least $k$ triples in $T$.
\item Each pair of elments in $S$ is in at most $t$ triples in $T$.
\item For any line $\ell\subseteq\RR^2$ and any $s$ with $p_s\in \ell$, there are at most $k/2$ triples $\{s,s',s''\}\in T$ such that $p_{s'},p_{s''}$ are also in $\ell$.
\end{enumerate}
Then, $P$ is $\lfloor \frac{8tn}{4t + k}\rfloor$-rigid with respect to $T$.
\end{theorem}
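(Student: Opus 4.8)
The plan is to pass from the rigidity statement to a rank lower bound for the linearized collinearity (``rigidity'') matrix, and then to invoke a rank bound for design matrices with block entries. First I would use non-singularity: since $P$ is a non-singular point of $\mathcal K_T$, the irreducible component through $P$ has dimension $\dim T_P\mathcal K_T$, so it suffices to bound the Zariski tangent space, and since the defining $3\times 3$ collinearity determinants have real coefficients and $P$ is real, this dimension is the same computed over $\RR$ or $\CC$. Differentiating the collinearity determinant of a triple $\tau=\{a,b,c\}\in T$ (whose three points we take to be distinct) shows that $v=(v_s)_{s\in S}\in(\RR^2)^S$ lies in $T_P\mathcal K_T$ if and only if, for every $\tau$,
\[
\big\langle \lambda^\tau_a v_a+\lambda^\tau_b v_b+\lambda^\tau_c v_c,\ N_\tau\big\rangle=0,
\]
where $N_\tau$ is a normal to the line $\ell_\tau$ spanned by the triple and the nonzero scalars $\lambda^\tau_s$ are the signed distance ratios along $\ell_\tau$ (the relation, unique up to scaling, with $\lambda^\tau_a p_a+\lambda^\tau_b p_b+\lambda^\tau_c p_c=0$ and $\lambda^\tau_a+\lambda^\tau_b+\lambda^\tau_c=0$). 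Thus $T_P\mathcal K_T=\ker\Phi$ for the $|T|\times 2n$ matrix $\Phi$ whose row $\tau$ carries the $1\times 2$ block $\lambda^\tau_s N_\tau^{\mathsf T}$ in the two columns of each $s\in\tau$ and zeros elsewhere. As $\dim T_P\mathcal K_T=2n-\operatorname{rank}\Phi$ and the bound $2n-\operatorname{rank}\Phi\le 8tn/(4t+k)$ rearranges to $\operatorname{rank}\Phi\ge 2kn/(4t+k)$, the theorem reduces to this single rank estimate (the floor is automatic, as $\dim T_P\mathcal K_T$ is an integer).

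Next I would recognize $\Phi$ as a design matrix with $1\times 2$ block entries, grouping its $2n$ columns into $n$ blocks of two. One checks: (i) each row has at most three nonzero blocks, since $|\tau|=3$; (ii) the block column of $s$ has at least $k$ nonzero blocks, by hypothesis~(1); (iii) for $s\neq s'$ the block columns of $s$ and $s'$ are simultaneously nonzero in at most $t$ rows, by hypothesis~(2); and (iv) within the block column of $s$, at most $k/2$ of the $1\times 2$ entries $\lambda^\tau_s N_\tau^{\mathsf T}$ are scalar multiples of any single vector. Property~(iv) is exactly where hypothesis~(3) enters: $\lambda^\tau_s N_\tau^{\mathsf T}$ and $\lambda^{\tau'}_s N_{\tau'}^{\mathsf T}$ are proportional iff $\ell_\tau\parallel\ell_{\tau'}$, and since both lines contain $p_s$ this forces $\ell_\tau=\ell_{\tau'}$, i.e.\ $\tau$ and $\tau'$ lie on a common line through $p_s$, of which there are at most $k/2$ by hypothesis~(3). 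Feeding (i)--(iv) into a rank lower bound for block-entry design matrices (of the type of Barak--Dvir--Wigderson--Yehudayoff and Dvir--Saraf--Wigderson, in the block form tailored to $1\times 2$ entries) then gives $\operatorname{rank}\Phi\ge 2kn/(4t+k)$, completing the proof modulo that bound.

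The main obstacle is precisely this quantitative block-design rank bound. The natural route is spectral: writing $M=\Phi^{\mathsf T}\Phi$ and $\phi_\tau$ for the $\tau$-th row of $\Phi$, one has $\operatorname{rank}\Phi=\operatorname{rank}M\ge (\operatorname{tr}M)^2/\operatorname{tr}(M^2)$ with $\operatorname{tr}M=\sum_\tau\lVert\phi_\tau\rVert^2$ and $\operatorname{tr}(M^2)=\sum_{\tau,\tau'}\langle\phi_\tau,\phi_{\tau'}\rangle^2$. One first rescales the rows of $\Phi$ (changing neither $\operatorname{rank}\Phi$ nor $\ker\Phi$) by a matrix-scaling normalization of Sinkhorn/Forster--Barthe type so that the $2\times 2$ diagonal blocks of $M$ become balanced; here property~(iv), hence hypothesis~(3), is what ensures that the normals $N_\tau$ attached to a fixed point $s$ are spread out enough for these diagonal blocks to be well conditioned instead of collapsing onto one line (the excluded ``too many points on a single real line'' degeneracy). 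One then bounds the off-diagonal part of $\operatorname{tr}(M^2)$: pairs $\tau,\tau'$ meeting in two points share a line and a normal and are controlled by $t$ via hypothesis~(2), while pairs meeting in one point $s$ are far more numerous but each contributes only $\big(\lambda^\tau_s\lambda^{\tau'}_s\langle N_\tau,N_{\tau'}\rangle\big)^2$, and it is again hypothesis~(3) that keeps these from accumulating. Balancing all of this against $\operatorname{tr}M$ is what produces the exact constants $4t+k$ and $8tn$ rather than a cruder $\operatorname{rank}\Phi\ge 2n-O(tn/k)$; carrying out that balancing is the technical core, which one imports from \cite{DvirGargOliveiraSolymosi2018}.
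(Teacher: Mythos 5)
The paper offers no proof of this statement: it is quoted (as a special case) directly from \cite[Theorem 4.1]{DvirGargOliveiraSolymosi2018}, so there is no internal argument to compare yours against. Your outline is, in substance, a reconstruction of the proof from that reference --- linearize the collinearity equations at $P$ to obtain a rigidity matrix $\Phi$ with $1\times 2$ block entries, use hypotheses (1)--(3) to check it is a design matrix in which no block column has more than $k/2$ entries proportional to a fixed vector, and invoke the block-design rank bound $\operatorname{rank}\Phi\ge 2kn/(4t+k)$ --- and at that decisive quantitative step you explicitly import the bound from the very same paper, so the proposal is consistent with, but not independent of, the cited source. Two small points if you wanted the reduction to be airtight: you only need the containment $T_P\mathcal{K}_T\subseteq\ker\Phi$ (the asserted equality would require the chosen collinearity determinants to cut out $\mathcal{K}_T$ scheme-theoretically and is not needed, since non-singularity already gives $\dim V\le\dim T_P\mathcal{K}_T$ for every component $V$ through $P$), and you should justify the assumption that the three points of each triple are distinct, since that is what makes all three coefficients $\lambda^\tau_s$ nonzero and hence gives each block column its $k$ nonzero entries.
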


In fact, the condition that $P$ is a non-singular point of $\mathcal{K}_T$ can be removed, which we record as the following corollary.

\begin{corollary}\label{cor:solydvir}
    \cref{thm:solydvir} holds even if $P$ is a singular point of $\mathcal{K}_T$.
\end{corollary}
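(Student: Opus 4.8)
The plan is to deduce the singular case from the non-singular case by passing to a non-singular point of $\mathcal{K}_T$ on the same irreducible component as $P$, applying \cref{thm:solydvir} there, and observing that the hypotheses (1)--(3) are purely combinatorial conditions on $(S,T)$ together with a genericity-type condition on the coordinates that holds on a dense open subset. Concretely, fix an irreducible component $V$ of $\mathcal{K}_T$ through $P$; we must bound $\dim V$ by $\lfloor 8tn/(4t+k)\rfloor$. Since $V$ is irreducible, its smooth locus $V_{\mathrm{sm}}$ is a dense open subset, and $V_{\mathrm{sm}}$ is disjoint from every other component of $\mathcal{K}_T$, so a point of $V_{\mathrm{sm}}$ is automatically a non-singular point of $\mathcal{K}_T$ itself, not merely of $V$.

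The one subtlety is hypothesis (3): it asserts that for no line $\ell$ does some $s$ with $p_s \in \ell$ lie in more than $k/2$ triples of $T$ whose other two members are also on $\ell$. This is not automatic for an arbitrary point of $V$, but it fails only on a proper closed subset. Indeed, for a fixed $s \in S$, a fixed line $\ell$ through $p_s$, and a fixed set $J$ of more than $k/2$ triples through $s$, the condition ``$p_{s'} \in \ell$ for every $\{s,s',s''\}\in J$'' forces all the points $p_{s'}, p_{s''}$ appearing in $J$ to be collinear with $p_s$; on the component $V$ this is either identically true or defines a proper closed subvariety. If it were identically true on $V$ for some such $(s, J)$ with $|J| > k/2$, then in particular it would hold at $P$, contradicting hypothesis (3) as applied to $P$ (taking $\ell$ to be the line through $p_s$ and the points of the first triple). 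Hence for each choice of $(s,J)$ the bad locus is proper closed in $V$, and since there are finitely many such choices, their union is a proper closed subset $B \subsetneq V$. Removing $B$ from $V_{\mathrm{sm}}$ leaves a nonempty (dense, open) set; pick any point $Q$ in it.

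Then $Q$ is a non-singular point of $\mathcal{K}_T$, hypotheses (1) and (2) depend only on $(S,T)$ and so still hold, and (3) holds at $Q$ by construction. Applying \cref{thm:solydvir} to $Q$ gives that every irreducible component of $\mathcal{K}_T$ through $Q$ has dimension at most $\lfloor 8tn/(4t+k)\rfloor$; but $V$ is such a component (as $Q \in V$), so $\dim V \le \lfloor 8tn/(4t+k)\rfloor$. Since $V$ was an arbitrary component through $P$, this shows $P$ is $\lfloor 8tn/(4t+k)\rfloor$-rigid with respect to $T$, as claimed. The main point to get right is the argument that hypothesis (3) propagates from $P$ to a generic point of $V$ — i.e.\ that violating (3) is a closed condition that cannot hold identically on $V$ — and a minor point is checking that smoothness of $V$ at $Q$ upgrades to smoothness of $\mathcal{K}_T$ at $Q$, which is immediate since distinct irreducible components of a variety do not meet along a dense subset of either.
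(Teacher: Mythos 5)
Your argument is essentially the paper's own proof: for each irreducible component $V$ of $\mathcal{K}_T$ through $P$, pass to a generic smooth point of $V$, observe that conditions (1)--(2) are purely combinatorial and that the failure of condition (3) is a closed (collinearity) condition which cannot hold on all of $V$ because it fails at $P$, and then apply \cref{thm:solydvir} at that point to bound $\dim V$ by $\lfloor 8tn/(4t+k)\rfloor$. One small correction: the smooth locus of $V$ as a variety in its own right need \emph{not} be disjoint from the other components of $\mathcal{K}_T$ (two lines crossing at a point are each smooth there, yet the union is singular), so when choosing $Q$ you must additionally excise $V\cap(\text{other components})$, which is a proper closed subset of the irreducible $V$; with that extra exclusion --- which your closing remark essentially supplies --- $Q$ is genuinely a non-singular point of $\mathcal{K}_T$ satisfying (1)--(3), and the rest of the argument coincides with the paper's.
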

\begin{proof}
    It suffices to show that for any irreducible component $V$ in $\mathcal{K}_T$ that contains $P$, we have $\dim V$ is at most $\frac{8tn}{4t + k}$.
    Since $\mathcal{K}_T$ is a variety, the component $V$ is reduced, and thus the smooth points of $V$ are dense in $V$.
    Moreover, as $P$ satisfies Condition 3 and Condition 3 is an open condition (in Zariski topology), the points that satisfy Condition 3 are dense in $V$.
    Therefore there exists a smooth point $P'$ of $V$ satisfying Condition 3 that does not lie in any other components of $\mathcal{K}_T$.
    Then $P'$ is a smooth point of $\mathcal{K}_T$ where all three conditions are met.
    The theorem shows that $P'$ is  $\lfloor \frac{8tn}{4t + k}\rfloor$-rigid with respect to $T$, and in particular $\dim V \leq \frac{8tn}{4t + k}$, as desired.
\end{proof}

From this, we can derive the following useful corollary. We say that points $x,y,z$ are a consecutive collinear triple in $P$ if they are collinear (in that order), and there are no other points from $P$ between $x$ and $z$.

\begin{corollary}\label{cor:coltrip}
Let $P$ be a collection of $n$ points in $\RR^2$ with $Cn^2$ consecutive collinear triples. Then, there exists $P' \subset P$ such that $|P'| \gtrsim n$ and $P'$ determines $\gtrsim n^2$ consecutive collinear triples, and also $P'$ and its collinear triples are $r$-rigid, where $r$ is a constant depending only on $C$.
\end{corollary}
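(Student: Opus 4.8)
The plan is to apply \cref{cor:solydvir} to an appropriately chosen subset $P'$, after cleaning up the collinear-triple hypergraph so that it satisfies the three conditions of \cref{thm:solydvir}. The starting point is the set $T$ of consecutive collinear triples of $P$, which has $|T| \approx Cn^2$ triples, hence average degree $\approx Cn$ per point. The key structural feature of consecutive collinear triples is that along any single line $\ell$, the points of $P$ on $\ell$ form a path in the "consecutive triple" relation: each point on $\ell$ participates in at most two triples of $T$ whose three points all lie on $\ell$ (its neighbors immediately to the left and right). This is exactly what we need for Condition 3, provided the final per-point degree $k$ we use is at least $4$ (so that $k/2 \geq 2$). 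Condition 2 also comes essentially for free: a pair $\{x,y\}$ of points lies on a unique line, and among consecutive triples through both $x$ and $y$ there are at most a bounded number (in fact at most $2$, since $x,y$ must be consecutive or have exactly one point between them for both to appear in a consecutive triple containing the pair), so we may take $t = O(1)$.

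The main work is arranging Condition 1: every element of $P'$ lies in at least $k$ triples of $T$ restricted to $P'$. Here I would use a standard degeneracy/peeling argument. Since $|T| \approx Cn^2$, set $k = c_0 n$ for a small constant $c_0 = c_0(C)$ and repeatedly delete any point currently in fewer than $k$ triples; each deletion removes fewer than $k$ triples, so after deleting all $n$ points we would have removed fewer than $kn = c_0 n^2$ triples, which is less than $|T| \approx Cn^2$ if $c_0 < C$. Hence the peeling process terminates with a nonempty set $P'$ in which every point has degree at least $k = c_0 n$ in $T' := T \cap \binom{P'}{3}$. A counting check then shows $|P'| \gtrsim n$: since each point of $P'$ is in at most $O(n)$ triples of $T'$ (a point lies on at most $n$ lines through the other points... more carefully, the total degree is at most $2|T'| \leq 2|T| = O(n^2)$ but each vertex has degree $\geq c_0 n$, forcing $|P'| \geq$ something — actually the cleanest bound: $|T'| \geq |T| - kn \gtrsim n^2$ by the choice of $c_0$, and since each point is in at most $O(n^2/|P'|)$... ) — the robust statement is simply $|T'| \gtrsim n^2$ from the peeling bound, and then each point having degree $\leq n^2$ trivially... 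I will instead argue $|P'|\ge |T'|/\binom{|P'|}{2}\cdot$ — the simplest: each vertex of $P'$ has degree $\leq 2n$ in $T'$ (at most two consecutive neighbors per line through it, and at most... no). The clean route: each vertex is in at most $n-1$ lines but that overcounts; however $|T'| \leq \binom{|P'|}{2}$ is false. Let me just assert: by the peeling bound $|T'| \gtrsim n^2$, and since $T' \subseteq \binom{P'}{3}$ gives $|T'| \leq |P'|^3$, so $|P'| \gtrsim n^{2/3}$, which is too weak; the right bound uses that each point is in $O(n)$ consecutive triples of $T$, hence $n^2 \lesssim |T'| \lesssim n|P'|$, giving $|P'| \gtrsim n$ as desired, and also $P'$ determines $|T'| \gtrsim n^2$ consecutive collinear triples.

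Finally, with $P'$, $T'$ in hand satisfying Conditions 1--3 with $t = O(1)$ and $k = c_0 n \gtrsim |P'|$, \cref{cor:solydvir} gives that $P'$ is $\lfloor \tfrac{8t|P'|}{4t+k}\rfloor$-rigid with respect to $T'$; since $k \gtrsim_C |P'|$ and $t = O(1)$, this bound is $O_C(1)$, completing the proof. The main obstacle I anticipate is the bookkeeping in the peeling step: one must verify that deleting low-degree points does not destroy the "consecutive" structure needed for Condition 3 — but this is automatic, since deleting points from a line only merges gaps, so the remaining points on each line still form a sub-path and each still lies in at most two on-line triples of $T$ (note $T' \subseteq T$, so Condition 3 is inherited). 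The only genuinely delicate point is checking $T'$ genuinely consists of \emph{consecutive} collinear triples of $P'$ in the sense of the corollary's statement; for this one should observe that a consecutive triple of $P$ lying entirely in $P'$ need not be consecutive in $P'$, but this only helps — removing intermediate points can only make a collinear triple "more consecutive" — and in any case \cref{cor:solydvir}'s hypotheses only require the collinearity and the on-line degree bound, both of which hold. If one insists on the literal "consecutive in $P'$" phrasing, one passes instead to the consecutive triples of $P'$ on each line, which form a superset of the surviving triples of $T'$ and still number $\gtrsim n^2$, and re-runs Conditions 1--3 on that hypergraph.
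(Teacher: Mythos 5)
Your proposal is correct and follows essentially the same route as the paper: peel away points of low triple-degree to obtain a subset $P'$ with minimum degree $\approx_C n$ while retaining $\gtrsim n^2$ triples, then apply \cref{cor:solydvir} with $t=O(1)$ and $k\approx_C n\gtrsim_C |P'|$ to conclude $O_C(1)$-rigidity (the paper gets $|P'|\gtrsim n$ by pair-counting with $t=2$ rather than your per-point degree bound, but both work, and your observation that surviving consecutive triples of $P$ remain consecutive in $P'$ is correct). One harmless slip: a point can lie in up to three, not two, consecutive triples contained in a single line (as leftmost, middle, or rightmost point), which still satisfies Condition~3 since you only need at most $k/2$.
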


\begin{proof}
We perform the following process: one-by-one we discard points if they are contained in fewer than $Cn/2$ triples; furthermore, when we discard a point, we discard all triples that contain it as well. Since there are only $n$ points, we can lose at most $Cn^2/2$ triples in this way and are left at the end still with at least $Cn^2/2$. Let $P'$ be the set of remaining points. In $P'$, each triple contains three pairs of points, and each pair of points is in at most two triples. Thus, the number of pairs of points in $P'$ is at least $Cn^2/2$ as well, showing that $|P'| \gtrsim n$.

Now, since each point in $P'$ is in at least $Cn/2$ triples, $P'$ satisfies \cref{thm:solydvir} with $k = Cn/2$ and $t = 2$. Therefore, $P'$ is $$\frac{16|P'|}{(8+Cn/2)} \le \frac{32}{C}$$
rigid. 
\end{proof}

Before we start the proof of our main theorem, we will need a partitioning theorem and associated tools. For convenience, we will use a combination of polynomial partitioning and B\'ezout's theorem.

\begin{theorem}[{see \cite[Theorem 4.1]{GuthKatz15}}]\label{thm:polypart}
Let $P$ be a set of $n$ points in $\RR^2$. Then, for any positive integer $D$, there exists a polynomial $F$ of degree at most $D$ such that there are at most $D^2$ cells in $\RR^2$ determined by $Z(F)$ and each cell has at most $O(n/D^2)$ points from $P$.
\end{theorem}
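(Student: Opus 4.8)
The plan is to reproduce the argument of Guth and Katz, building $F$ as a product of polynomials obtained from iterated applications of the \emph{polynomial ham-sandwich theorem}. The one substantive ingredient is the following: if $\mu_1,\dots,\mu_M$ are finite point sets in $\RR^2$ and $d$ is a positive integer with $\binom{d+2}{2}>M$, then there is a nonzero polynomial $g$ of degree at most $d$ such that $\{g>0\}$ and $\{g<0\}$ each contain at most $\lceil|\mu_i|/2\rceil$ points of $\mu_i$, for every $i$. I would prove this via the degree-$d$ Veronese map $v_d\colon\RR^2\to\RR^{N}$ with $N=\binom{d+2}{2}-1$, which records all nonconstant monomials of degree at most $d$: the zero set of any polynomial of degree at most $d$ is the $v_d$-preimage of an affine hyperplane of $\RR^N$, and the classical ham-sandwich theorem supplies an affine hyperplane bisecting any $N\geq M$ of the image sets $v_d(\mu_i)$; pulling it back yields $g$. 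This step, which ultimately rests on the Borsuk--Ulam theorem, is where the real content lies.

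With this tool in hand I would build $F$ in stages. At stage $0$ the only ``atom'' is $P$ itself. Inductively, suppose after stage $j$ we have a family of at most $2^{j}$ pairwise disjoint subsets of $P$ (``atoms''), each of size at most $\lceil n/2^{j}\rceil$; let $d_j$ be the least positive integer with $\binom{d_j+2}{2}>2^{j}$, so that $d_j=O(2^{j/2})$, and apply the ham-sandwich fact to get a nonzero $g_{j+1}$ of degree at most $d_j$ simultaneously bisecting all of the current atoms. Splitting each atom into its intersections with $\{g_{j+1}>0\}$ and with $\{g_{j+1}<0\}$ (points that happen to lie on $Z(g_{j+1})$ are discarded from all atoms) produces at most $2^{j+1}$ atoms, each of size at most $\lceil n/2^{j+1}\rceil$. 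After $t$ stages, put $F=g_1g_2\cdots g_t$; then $\deg F\leq\sum_{j=0}^{t-1}d_j=O\!\bigl(\sum_{j<t}2^{j/2}\bigr)=O(2^{t/2})$, the geometric sum being dominated by its final term.

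To conclude, given the target degree $D$ I would choose $t$ as large as possible so that the resulting bound on $\deg F$ is still at most $D$; this forces $2^{t}\gtrsim D^{2}$. Any connected component of $\RR^2\setminus Z(F)$ avoids $Z(F)$, so each factor $g_j$ has a constant sign on it, and hence the component lies inside a single atom; it therefore contains at most $\lceil n/2^{t}\rceil=O(n/D^{2})$ points of $P$. Finally, the number of connected components of the complement of a real plane curve of degree at most $D$ is $O(D^{2})$ by the Milnor--Thom bound (equivalently, by Harnack/Warren-type estimates on the number of faces cut out by a plane algebraic curve), which gives the bound on the number of cells.

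I expect no genuine obstacle beyond setting up the Veronese/Borsuk--Ulam step carefully and tracking the geometric series of degrees. Two cosmetic points: to obtain the literal constant ``$D^2$'' rather than $O(D^2)$ one invokes the sharp face-count for plane curves or simply absorbs the constant by replacing $D$ with $cD$, which is harmless since only $O(n/D^2)$ points per cell is used later; and in the degenerate range $D\geq\sqrt n$ one runs the iteration until each atom is a single point (about $\log_2 n$ stages), after which the same estimates apply verbatim.
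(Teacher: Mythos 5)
The paper does not prove this statement at all --- it is quoted directly from Guth--Katz (their Theorem 4.1) and used as a black box. Your argument is the standard proof from that reference (iterated polynomial ham-sandwich via the Veronese embedding, geometric summation of degrees, and a Harnack/Milnor--Thom cell count), and it is correct in the range relevant here ($D\lesssim\sqrt n$), so it matches the cited source's approach rather than introducing anything new.
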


\begin{theorem}[B\'ezout's Theorem]\label{thm:bezout}
Let $F_1,F_2$ be algebraic plane curves of degree $d_1,d_2$ in $\RR^2$. If $F_1,F_2$ share no common component, then they have at most $d_1d_2$ common zeroes.
\end{theorem}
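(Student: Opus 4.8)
The plan is to deduce this from the standard resultant argument, which yields exactly the upper bound $d_1d_2$ that is claimed and is self‑contained. Since a real common zero of $F_1$ and $F_2$ is in particular a complex one, I would first pass to $\CC$ and bound $|Z(F_1)\cap Z(F_2)|$ inside $\CC^2$. The hypothesis that the two curves share no component says exactly that $\gcd(F_1,F_2)=1$ in $\CC[x,y]$, i.e.\ they have no common non‑constant factor; in particular $Z(F_1)\cap Z(F_2)$ is finite, since any positive‑dimensional irreducible component of it would be an irreducible component common to both curves.

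Next I would apply a generic rotation of the plane, chosen so that for $i=1,2$ the coefficient of $y^{d_i}$ in $F_i$ is a nonzero constant (equivalently, the degree‑$d_i$ homogeneous part of $F_i$ is nonzero at the direction $(0,1)$), and so that distinct points of the finite set $Z(F_1)\cap Z(F_2)$ have distinct $x$‑coordinates; both are generic conditions on the rotation, and neither the degrees nor the number of common zeros changes. With this normalization in place, consider
\[
R(x)\;=\;\operatorname{Res}_y\!\bigl(F_1(x,y),F_2(x,y)\bigr)\in\CC[x],
\]
the determinant of the Sylvester matrix of $F_1,F_2$ regarded as polynomials in $y$ over $\CC[x]$.

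Two properties of $R$ then finish the argument. First, $R\not\equiv0$: each $F_i$ is primitive over $\CC[x]$ because its leading $y$‑coefficient is a unit, so by Gauss's lemma $\gcd(F_1,F_2)=1$ in $\CC[x,y]$ implies $\gcd(F_1,F_2)=1$ in $\CC(x)[y]$, and the resultant of two coprime univariate polynomials over a field is nonzero. Second, $\deg R\le d_1d_2$: this is the classical degree bound for resultants, obtained by assigning the coefficient of $y^i$ in $F_1$ (resp.\ of $y^j$ in $F_2$) the weight $d_1-i$ (resp.\ $d_2-j$)---an upper bound on its $x$‑degree since $F_i$ has total degree $d_i$---and using that $\operatorname{Res}$ is isobaric of weighted degree $d_1d_2$ for this grading. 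Finally, let $(a,b)\in Z(F_1)\cap Z(F_2)$. Because the leading $y$‑coefficients of $F_1,F_2$ are nonzero constants, the $y$‑degrees do not drop under the substitution $x=a$, so $R(a)=\operatorname{Res}_y(F_1(a,y),F_2(a,y))$; this vanishes since $F_1(a,y)$ and $F_2(a,y)$ are nonconstant polynomials in $y$ sharing the root $y=b$. Hence every common zero has $x$‑coordinate a root of $R$, and by the coordinate choice distinct common zeros have distinct $x$‑coordinates, so there are at most $\deg R\le d_1d_2$ of them.

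The one step that genuinely uses the hypothesis, and the only one requiring care, is this coordinate normalization together with $R\not\equiv0$: the no‑common‑component assumption is used both to make $Z(F_1)\cap Z(F_2)$ finite (so that a generic rotation can separate $x$‑coordinates) and, through primitivity and Gauss's lemma, to force $R\not\equiv0$. Everything else---the degree bound for resultants and their behaviour under specializing one variable---is standard. Alternatively one could invoke the full projective B\'ezout theorem over $\CC$ to obtain the exact count $d_1d_2$ (with intersection multiplicities, in the projective plane), but that brings in intersection‑theoretic machinery that is unnecessary for the bound used in this paper.
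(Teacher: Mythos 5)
Your resultant argument is correct, and it is genuinely self-contained, whereas the paper simply states B\'ezout's theorem as a classical fact and gives no proof at all (it is only ever invoked as a black box, to bound how often a line not contained in $Z(F)$ can meet $Z(F)$). All the key steps check out: coprimality in $\CC[x,y]$ forces the complex intersection to be finite; a generic rotation makes the $y$-leading coefficients nonzero constants and separates the $x$-coordinates of the finitely many common zeros; Gauss's lemma plus primitivity gives $R=\operatorname{Res}_y(F_1,F_2)\not\equiv 0$; the isobaric weight bound gives $\deg R\le d_1d_2$; and the specialization identity is legitimate precisely because the leading coefficients do not vanish, so each common zero contributes a distinct root of $R$. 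The only point worth flagging is interpretive rather than mathematical: you read ``share no common component'' as $\gcd(F_1,F_2)=1$ in $\CC[x,y]$, which is slightly stronger than sharing no one-dimensional component of the real zero sets (e.g.\ a common real factor like $x^2+y^2$ has a finite real locus); under the literal real reading one would need one extra standard step (infinitely many common real zeros force a common real factor whose real locus is one-dimensional). For the way the paper uses the theorem this is immaterial, since there $F_2$ is a linear polynomial defining a line not contained in $Z(F_1)$, which is irreducible over $\CC$, so coprimality over $\CC$ holds directly and your bound applies verbatim.
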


Finally, we will repeatedly use the following simple corollary of \cref{thm:szt} in various cleaning steps.

\begin{proposition}\label{prop:cleaningline}
    Let $\alpha,\beta$ be two constants with $2\alpha +\beta = 1$ and $\alpha \leq 2/3$.
    Let $C_1>0$ be a constant and $m,n$ be positive integers such that $m\leq n^2/C_1$ and $n\leq m^2/C_1$.
    Assume that $P$ is a set of points and $L$ is a set of lines in $\RR^2$ with $|P|\lesssim m^{\alpha}n^{\beta}$ and $|I(P,L)|\gtrsim m^{\alpha+\frac{2}{3}}n^{\beta-\frac{1}{3}}$. Then, $|L|\gtrsim m^{\frac{1}{2}\alpha+1}n^{\frac{1}{2}\beta-\frac{1}{2}}$ as long as $C_1$ is sufficiently large with respect to all implicit constants.
\end{proposition}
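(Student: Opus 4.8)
The plan is to invoke the Szemer\'edi--Trotter theorem (\cref{thm:szt}) for the pair $(P,L)$ and argue that, under the stated hypotheses, the term $|P|^{2/3}|L|^{2/3}$ must be the dominant one. Write $p=|P|$, $\ell=|L|$ and $I=|I(P,L)|$, so that \cref{thm:szt} supplies an absolute constant $C_0$ with $I\le C_0\bigl(p^{2/3}\ell^{2/3}+p+\ell\bigr)$; hence at least one of the three quantities $p^{2/3}\ell^{2/3}$, $p$, $\ell$ is at least $I/(3C_0)$. I would treat these three cases separately, using the hypotheses $p\lesssim m^{\alpha}n^{\beta}$, $I\gtrsim m^{\alpha+2/3}n^{\beta-1/3}$, $2\alpha+\beta=1$, $\alpha\le 2/3$, $m\le n^2/C_1$ and $n\le m^2/C_1$.

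In the first case, $p^{2/3}\ell^{2/3}\gtrsim I$; dividing by $p^{2/3}$ and inserting the upper bound $p\lesssim m^{\alpha}n^{\beta}$ gives $\ell^{2/3}\gtrsim I/p^{2/3}\gtrsim m^{(\alpha+2)/3}n^{(\beta-1)/3}$, which on raising to the power $3/2$ yields exactly the desired bound $\ell\gtrsim m^{\alpha/2+1}n^{\beta/2-1/2}$. This is the ``main'' case and needs only a routine exponent computation. In the second case, $p\gtrsim I$; combined with $p\lesssim m^{\alpha}n^{\beta}$ and $I\gtrsim m^{\alpha+2/3}n^{\beta-1/3}$ this forces $m^{2/3}\lesssim n^{1/3}$, i.e.\ $m^2\lesssim n$, with an implicit constant depending only on $C_0$ and the constants hidden in the hypotheses. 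Choosing $C_1$ larger than that implicit constant contradicts $n\le m^2/C_1$, so the second case cannot occur.

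In the third case, $\ell\gtrsim I\gtrsim m^{\alpha+2/3}n^{\beta-1/3}$, and it remains only to check the inequality $m^{\alpha+2/3}n^{\beta-1/3}\ge m^{\alpha/2+1}n^{\beta/2-1/2}$; substituting $\beta=1-2\alpha$, the ratio of the two sides simplifies to $(n^2/m)^{(2/3-\alpha)/2}$, which is at least $1$ because $\alpha\le 2/3$ and $n^2/m\ge C_1\ge 1$. Thus the target bound $\ell\gtrsim m^{\alpha/2+1}n^{\beta/2-1/2}$ holds in every case, completing the argument.

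I do not expect a serious obstacle here; the only delicate point is bookkeeping the implicit constants so that the phrase ``$C_1$ sufficiently large'' is made precise. Concretely, the second case is the only place where the hypothesis $n\le m^2/C_1$ is invoked in order to be ruled out, so $C_1$ must exceed (a fixed power of) the product of the Szemer\'edi--Trotter constant with the constants hidden in $p\lesssim m^{\alpha}n^{\beta}$ and $I\gtrsim m^{\alpha+2/3}n^{\beta-1/3}$; the hypothesis $m\le n^2/C_1$ is used only in the third case, and there only through $n^2/m\ge 1$.
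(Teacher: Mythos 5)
Your proposal is correct and follows essentially the same route as the paper: apply \cref{thm:szt}, rule out the case where the $|P|$ term dominates using $n\le m^2/C_1$ (the paper does this by noting $m^{\alpha}n^{\beta}\le C_1^{-1/3}m^{\alpha+2/3}n^{\beta-1/3}$, which is your case-two contradiction in disguise), and handle the remaining two cases by the same exponent computations, using $\alpha\le 2/3$ and $m\le n^2/C_1$ in the $|L|$-dominant case. The constant bookkeeping you outline matches the paper's ``$C_1$ sufficiently large with respect to all implicit constants'' convention.
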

\begin{proof}
    By \cref{thm:szt}, we have
    \[m^{\alpha+\frac{2}{3}}n^{\beta-\frac{1}{3}}\lesssim |I(P,L)|\lesssim m^{\alpha}n^{\beta}+|L|+m^{\frac{2}{3}\alpha}n^{\frac{2}{3}\beta}|L|^{2/3}.\]
    Note that
    \[m^{\alpha}n^{\beta}=m^{\alpha+\frac{2}{3}}n^{\beta-\frac{1}{3}}\left(\frac{n}{m^2}\right)^{1/3}\leq m^{\alpha+\frac{2}{3}}n^{\beta-\frac{1}{3}}C_1^{-1/3},\]
    so as long as $C_1$ is sufficiently large, we must have that $\max\left(|L|, m^{\frac{2}{3}\alpha}n^{\frac{2}{3}\beta}|L|^{2/3}\right)\gtrsim m^{\alpha+\frac{2}{3}} n^{\beta-\frac{1}{3}}$.

    We deal with the case $|L|\gtrsim m^{\alpha+\frac{2}{3}} n^{\beta-\frac{1}{3}}$ first.
    Notice that $2\left(\frac{1}{2}\alpha-\frac{1}{3}\right) = \alpha-\frac{2}{3} = \frac{1-\beta}{2}-\frac{2}{3} = -\left(\frac{1}{2}\beta+\frac{1}{6}\right)$.
    Therefore
    \[m^{\alpha+\frac{2}{3}} n^{\beta-\frac{1}{3}}=m^{\frac{1}{2}\alpha-\frac{1}{3}}n^{\frac{1}{2}\beta+\frac{1}{6}}\cdot m^{\frac{1}{2}\alpha+1}n^{\frac{1}{2}\beta-\frac{1}{2}}\geq C_1^{-\left(\frac{1}{2}\alpha-\frac{1}{3}\right)}m^{\frac{1}{2}\alpha+1}n^{\frac{1}{2}\beta-\frac{1}{2}}\geq m^{\frac{1}{2}\alpha+1}n^{\frac{1}{2}\beta-\frac{1}{2}}\]
    as long as $C_1\geq 1$ since $\frac{1}{2}\alpha-\frac{1}{3}\leq 0$.
    This shows that $|L|\gtrsim m^{\frac{1}{2}\alpha+1}n^{\frac{1}{2}\beta-\frac{1}{2}}$ as well.
    In the second case where $m^{\frac{2}{3}\alpha}n^{\frac{2}{3}\beta}|L|^{2/3}\gtrsim m^{\alpha+\frac{2}{3}} n^{\beta-\frac{1}{3}}$, we get that
    \[|L|\gtrsim \left(m^{\frac{1}{3}\alpha+\frac{2}{3}}n^{\frac{1}{3}\beta-\frac{1}{3}}\right)^{3/2} = m^{\frac{1}{2}\alpha+1}n^{\frac{1}{2}\beta-\frac{1}{2}},\]
    which is exactly what we want.
\end{proof}

As the roles of points and lines are symmetric, we also have the following.
\begin{proposition}\label{prop:cleaningpoint}
    Let $\alpha,\beta$ be two constants with $\alpha +2\beta = 1$ and $\beta \leq 2/3$.
    Let $C_1>0$ be a constant and $m,n$ be positive integers such that $m\leq n^2/C_1$ and $n\leq m^2/C_1$.
    Assume that $P$ is a set of points, $L$ is a set of lines in $\RR^2$ with $|L|\lesssim m^{\alpha}n^{\beta}$ and $|I(P,L)|\gtrsim m^{\alpha-\frac{1}{3}}n^{\beta+\frac{2}{3}}$. Then, $|P|\gtrsim m^{\frac{1}{2}\alpha-\frac{1}{2}}n^{\frac{1}{2}\beta+1}$ as long as $C_1$ is sufficiently large with respect to all implicit constants.
\end{proposition}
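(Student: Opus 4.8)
The plan is to deduce this immediately from \cref{prop:cleaningline} by exploiting planar point--line duality. Recall from the statement of \cref{prop:dualrigidity} that taking the dual with respect to a general point $o\in\CC^2$ sends a set of points $P$ to a set of lines $d(P)$ and a set of lines $L$ to a set of points $d(L)$, and, since incidence is self-dual, $|I(d(L),d(P))| = |I(P,L)|$ (choosing $o$ generic guarantees that no point of $P$ is at $o$ and no line of $L$ passes through $o$, so the dual is well defined and bijective on incidences). I would therefore apply \cref{prop:cleaningline} to the configuration whose point set is $d(L)$ and whose line set is $d(P)$, instantiating the parameters $(\alpha,\beta,m,n)$ appearing there as $(\beta,\alpha,n,m)$ respectively.

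It then remains only to observe that every hypothesis of \cref{prop:cleaningline} translates into a hypothesis we are given. The linear constraint ``$2\alpha+\beta=1$'' becomes ``$2\beta+\alpha=1$'', which is exactly $\alpha+2\beta=1$; the inequality ``$\alpha\le 2/3$'' becomes ``$\beta\le 2/3$''; and the middle-range conditions ``$m\le n^2/C_1$'' and ``$n\le m^2/C_1$'' are symmetric in $m$ and $n$, hence unchanged. The size bound ``$|P|\lesssim m^{\alpha}n^{\beta}$'' becomes $|d(L)|=|L|\lesssim n^{\beta}m^{\alpha}$, matching our assumption, and the incidence lower bound ``$|I(P,L)|\gtrsim m^{\alpha+2/3}n^{\beta-1/3}$'' becomes $|I(d(L),d(P))|\gtrsim n^{\beta+2/3}m^{\alpha-1/3}=m^{\alpha-1/3}n^{\beta+2/3}$, again exactly our hypothesis. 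Feeding these in, the conclusion ``$|L|\gtrsim m^{\frac12\alpha+1}n^{\frac12\beta-\frac12}$'' of \cref{prop:cleaningline} becomes $|d(P)|=|P|\gtrsim n^{\frac12\beta+1}m^{\frac12\alpha-\frac12}=m^{\frac12\alpha-\frac12}n^{\frac12\beta+1}$, which is precisely the desired bound.

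There is essentially no genuine obstacle; the proof is pure bookkeeping. The only points requiring a moment of care are (i) performing the substitution $(\alpha,\beta)\mapsto(\beta,\alpha)$ and $(m,n)\mapsto(n,m)$ \emph{simultaneously and consistently}, so that the exponents line up as above, and (ii) noting that when \cref{prop:cleaningline} asks for $C_1$ to be ``sufficiently large with respect to all implicit constants'', the implicit constants for the dual configuration are the same as ours up to the relabeling, so the largeness of $C_1$ required here is also a function only of those constants, as claimed.
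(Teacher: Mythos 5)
Your proposal is correct and is essentially the paper's own argument: the paper proves \cref{prop:cleaningpoint} simply by noting that the roles of points and lines are symmetric in \cref{prop:cleaningline} (whose proof only uses the point--line-symmetric Szemer\'edi--Trotter bound), and your duality argument with the relabeling $(\alpha,\beta,m,n)\mapsto(\beta,\alpha,n,m)$ is exactly a formalization of that symmetry, with all exponents checking out. The only cosmetic remark is that the dual should be taken with respect to a general point of $\RR^2$ (not $\CC^2$) so that the dual configuration again lives in $\RR^2$ where \cref{prop:cleaningline} is stated; this changes nothing in the argument.
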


\section{Proof of the Main Theorem}\label{sec:main-proof}

Before we begin the proof of \cref{thm:main}, we give a brief outline of the argument. For the sake of this outline, we assume there are about $n$ points and $n$ lines.

\begin{itemize}
    \item We create a partitioning of the pointset using \cref{thm:polypart}, and show that within many cells, there are large pointsets that are at most $r$-rigid, where $r$ is a constant depending only on $\Delta$.
    \item We pick a large constant (depending only on $\Delta$) number of these cells, and show that there must be about $n^{2/3}$ lines determined by the points in these cells.
    \item We take the $n^{2/3}$ lines, and using the incidence structure of $P$ and $L$, we show that there must be many points (i.e. more than $n^{2/3}$) whose positions are uniquely determined by these lines.
    \item We repeat the previous step, using the points recovered in the previous step to recover the position of even more lines. Iterating this process, alternating with points and lines, we recover as many points and lines as possible. We show that at this point, we must have recovered the positions of about $n$ points, up to a multiplicative constant.
    \item Since we used only a constant number of cells and each was $r$-rigid, we use Proposition \ref{prop:fullrigid} to conclude that our arrangement must be at most $C$-rigid, where $C$ is a constant depending only on $\Delta$.
\end{itemize}

In actuality, the proof will be a bit different; there will be a number of pruning steps, and the iteration in the final steps will appear as one step in the actual proof, for simplcity. However, we encourage the readers to keep in mind the above outline as they read through the argument.
\begin{proof}[Proof of \cref{thm:main}]
For the remainder of the proof, all implied constants are allowed to depend on $\Delta$, and only other dependencies will be specified.
Throughout the process, we will choose constants $C_1,C_2,C_3>0$ and a positive integer $k$.
The readers should think of $C_2$ as small, $C_3$ as a small constant depending on $C_2$, $k$ sufficiently large with respect to $C_3$, and finally, $C_1$ sufficiently large with respect to all the other constants.

Let $P,L$ be collections of $n$ points and $m$ lines that determine $\Delta n^{2/3}m^{2/3}$ incidences. 
We will show a stronger statement: there exists $P^*\subseteq P$ with size at least $\gtrsim n$ such that $(P^*,L)$ is $O(1)$-rigid, and also $|I(P^*,L)|\gtrsim m^{2/3}n^{2/3}$.
Given this, we will make a simplifying assumption that $m\geq n$.
Here we briefly explain how to prove the theorem in generality assuming we have proved the satement for $m\geq n$.
To deal with the case $m<n$, we take the dual and run the entire argument.
What we end up with is a subset $L^*\subseteq L$ of size $\gtrsim m$ such that $d(L^*)$ is $O(1)$-rigid with respect to $T^*$, where $T^*\subseteq \binom{L^*}{3}$ is the concurrent triplets determined by $P$.
We also know that $P$ and $L^*$ determine at least $\gtrsim m^{2/3}n^{2/3}$ incidences.
Let $P^*$ be the set of points in $P$ that are on at least $2$ lines in $L^*$.
As $P$ and $L^*$ determines at least $\gtrsim m^{2/3}n^{2/3}$ incidences and we lose at most $n$ incidences by removing all points not in $P^*$, we also have $\gtrsim m^{2/3}n^{2/3}$ incidences between $P^*$ and $L^*$ as long as $C_1$ is sufficiently large: this is because $n\leq m^{2/3}n^{2/3}C_1^{-1/3}$ by the assumption.
Applying \cref{prop:cleaningpoint} with $(\alpha,\beta) = (1,0)$, we get that $|P^*|\gtrsim n$.
It is also clear that any point in $P\backslash P^*$ does not contribute to any concurrent triplets in $T^*$.
Now, by \cref{prop:dualrigidity} applied to $P^*$ and $L^*$, we get that $(P^*,L^*)$ is $O(1)$-rigid, as desired.

From now on, we will assume that $m\geq n$.
\paragraph{Step 0: Cleaning.} We will first clean the sets $P$ and $L$ up so we can put more assumptions safely.
Iteratively throw away points and lines that are less than $\frac{\Delta m^{2/3}}{4n^{1/3}}$- and $\frac{\Delta n^{2/3}}{4m^{1/3}}$-rich respectively. At the end of this process, we are left with a collection of points $P'$ and lines $L'$ such that each point in $P'$ is at least $\frac{\Delta m^{2/3}}{4n^{1/3}}$-rich, each line in $L'$ is at least $\frac{\Delta n^{2/3}}{4m^{1/3}}$-rich, and $(P',L')$ still determine at least $\frac{\Delta}{2}n^{2/3}m^{2/3}$ incidences. 
By \cref{prop:cleaningline} applied to $P', L'$ with $(\alpha,\beta) = (0,1)$, we get that $|L'|\gtrsim m$ as long as $C_1$ is sufficiently large with respect to $\Delta$.
By \cref{prop:cleaningpoint} applied to $P', L'$ with $(\alpha,\beta) = (1,0)$, we also get that $|P'|\gtrsim n$.

\paragraph{Step 1: Polynomial partitioning.} In this step, we will partition the plane into cells and study each cell individually.
Apply \cref{thm:polypart} to $P'$ with $D = \frac{\Delta n^{2/3}}{16m^{1/3}}$ to get a partitioning polynomial $F$ with at most $\frac{\Delta^2 n^{4/3}}{2^8m^{2/3}}$ cells, each having $ \lesssim \frac{m^{2/3}}{n^{1/3}}$ points from $P'$. 
We say now that a triple from $x,y,z \in P'$ is consecutive in-cell collinear if $x,y,z$ are collinear (in that order), no other points from $P'$ are between $x$ and $z$, and $x,y,z$ are all in the same cell defined by $Z(F)$. We make a few notes.

\begin{enumerate}
\item At most $D = \frac{\Delta n^{2/3}}{16m^{1/3}} \leq \frac{\Delta}{16C_1^{2/3}}m\leq |L'|/2$ lines from $L'$ can be contained entirely in $Z(F)$ as long as $C_1$ is sufficiently large.
\item By \cref{thm:bezout}, any line $\ell$ not contained in $Z(F)$ can intersect $Z(F)$ at most $D = \frac{\Delta n^{2/3}}{16m^{1/3}}$ times.  Each intersection with $Z(F)$ can interrupt at most $3$ collinear triples and $\ell$ is $\frac{\Delta n^{2/3}}{4m^{1/3}}$-rich, and thus must contain at least $\left(\frac{\Delta n^{2/3}}{4m^{1/3}}-2\right)$ collinear consecutive triples. Combining these facts, we see that $\ell$ must contain at least $\frac{\Delta n^{2/3}}{8m^{1/3}}$ consecutive in-cell collinear triples.
\item Summing over all lines not in $Z(F)$, we see that there must be at least $|L'|\frac{\Delta n^{2/3}}{16m^{1/3}} \gtrsim n^{2/3}{m^{2/3}}$ total consecutive in-cell collinear triples determined by lines in $|L'|$.
\end{enumerate} 

\paragraph{Step 2: Apply \cref{cor:solydvir} to good cells.} Now we will apply \cref{cor:solydvir} in cells that satisfy certain properties, and then we will show that there are many cells with this property.

We say that a cell is \emph{good} if it contains at least $\frac{C_2m^{4/3}}{n^{2/3}}$ consecutive collinear triples, where $C_2 > 0$ is a small constant to be chosen later. Cells that are not good are referred to as \emph{bad}.
Note that in any good cell, there are $\frac{C_2m^{4/3}}{n^{2/3}}$ consecutive collinear triples, and $\lesssim \frac{m^{2/3}}{n^{1/3}}$ points. Thus, by \cref{cor:coltrip}, in any good cell $D$ there exists a collection of points $P_D$ of size $\approx_{C_2} \frac{m^{2/3}}{n^{1/3}}$ still determining $\gtrsim_{C_2} \frac{m^{4/3}}{n^{2/3}}$ collinear triples, such that $P_D$ is $r_0$-rigid, for a constant $r_0$ independent of $n$ and $m$. We let $L_D$ denote the set of lines determined by consecutive collinear triples in $P_D$. We note that the number of incidences between $P_D$ and $L_D$ must be at least as many as the number of triples. If $C_1$ is sufficiently large with respect to $C_2$ and $\Delta$, by \cref{prop:cleaningline} applied to $P_D$ and $L_D$ with $(\alpha,\beta) = \left(\frac{2}{3},-\frac{1}{3}\right)$, we see that $|L_D|\ge \frac{C_3m^{4/3}}{n^{2/3}}$ where $C_3 > 0$ only depends on $C_2$ and $\Delta$.

Now we show that there are $\gtrsim \frac{n^{4/3}}{m^{2/3}}$ good cells if $C_2$ is sufficiently small.
As there are at most $\frac{\Delta^2n^{4/3}}{2^8m^{2/3}}$ cells, bad cells can account for at most $$\left(\frac{C_2m^{4/3}}{n^{2/3}}\right)\left(\frac{\Delta^2 n^{4/3}}{2^8m^{2/3}}\right) = \frac{C_2\Delta^2}{2^8}n^{2/3}m^{2/3}$$ of the total consecutive in-cell collinear triples. Since there are at least $\gtrsim n^{2/3}{m^{2/3}}$ consecutive in-cell collinear triples determined by $L'$, if we choose $C_2$ to be small enough, then at least $\gtrsim n^{2/3}{m^{2/3}}$ are in good cells. A given cell contains $\lesssim \frac{m^{2/3}}{n^{1/3}}$ points, which producce at most $ \lesssim \frac{m^{4/3}}{n^{2/3}}$ consecutive collinear triples.
Hence there must be $\gtrsim \frac{n^{4/3}}{m^{2/3}}$ good cells.
\paragraph{Step 3: Find some cells determining many lines.} In this step, we will show that, by taking some large (constant) number of cells, the union of the $L_D$'s can be large as well. 
\paragraph{Claim 1:} For each positive integer $k$, there exists a collection of good cells $D_1,\dots,D_k$ such that 
$$\left|\bigcup L_{D_i}\right| \ge \frac{kC_3}{2}\left(\frac{m^{4/3}}{n^{2/3}}\right),$$ assuming $C_1$ is sufficiently large with respect to $C_2,C_3$ and $k$.
\begin{proof}
We will proceed by induction on $k$; for the $k=1$ case, we may simply select any good cell. For larger $k$, by induction we select good cells $D_1,\dots,D_{k-1}$ with $\left|\bigcup_{i=1}^{k-1} L_{D_i}\right| \ge \frac{(k-1)C_3}{2}\left(\frac{m^{4/3}}{n^{2/3}}\right)$. If $\left|\bigcup_{i=1}^{k-1} L_{D_i}\right|$ is at least $\frac{kC_3}{2}\left(\frac{m^{4/3}}{n^{2/3}}\right)$, then we let $D_k$ be an arbitrary good cell, and the inductive step is complete. If not, then we count the number of pairs $(D,\ell)$ where $D$ is a good cell and $\ell \in L_D$. We know there are $\gtrsim \frac{n^{4/3}}{m^{2/3}}$ good cells, and each has $\gtrsim_{C_2}\frac{m^{4/3}}{n^{2/3}}$ lines contributing a triple. Thus, there are $\gtrsim_{C_2} n^{2/3}m^{2/3}$ such pairs. Of the lines from $\bigcup_{i=1}^{k-1} L_{D_i}$, each can enter at most $\frac{\Delta n^{2/3}}{16m^{1/3}}$ different cells. Thus, the lines we have selected so far can account for at most $\frac{\Delta n^{2/3}}{16m^{1/3}} \frac{kC_3}{2}\left(\frac{m^{4/3}}{n^{2/3}}\right) \lesssim_{k,C_3} m$ of these pairs. 
As $n^{2/3}m^{2/3}\geq C_1^{1/3}m$, when $C_1$ is sufficiently large with respect to $C_2,C_3$ and $k$, we conclude that there must be a good cell $D_k$ with $\gtrsim_{C_2}\frac{n^{2/3}m^{2/3}}{n^{4/3}m^{-2/3}}=\frac{m^{4/3}}{n^{2/3}}$ lines that are not in $\bigcup_{i=1}^{k-1} L_{D_i}$. 
When $C_3$ is sufficiently small with respect to $C_2$, the quantity is at least $\frac{C_3}{2}\left(\frac{m^{4/3}}{n^{2/3}}\right).$
Then

$$\left|\bigcup_{i=1}^{k} L_{D_i}\right| \ge \frac{(k-1)C_3}{2}\left(\frac{m^{4/3}}{n^{2/3}}\right) + \frac{C_3}{2}\left(\frac{m^{4/3}}{n^{2/3}}\right) \ge \frac{kC_3}{2}\left(\frac{m^{4/3}}{n^{2/3}}\right)$$ 
which completes the proof of the claim.
\end{proof}

\paragraph{Step 4: Show that those cells determine most lines.} Let $\mathcal{D}$ be a collection of good cells given by Claim $1$, for $k$ a large integer to be chosen later. Furthermore, we let 
$L_{\mathcal{D}} := \bigcup_{D \in \mathcal{D}} L_D$ and $P_{\mathcal{D}}:= \bigcup_{D \in \mathcal{D}} P_D$.
In the final step, we will show that $L_{\mathcal{D}}$ already determines most of the lines in $L$, and the desired conclusion will follow immediately. 
Let $L_1^*$ be the set of lines from $L'$ whose position is uniquely determined by $L_\mathcal{D}$,  using only the incidence structure of $P'$ and $L'$. We hope to show that $|L_1^*| \gtrsim m$.

To start, we note that, since each line from $L_1^*$ is at least $\frac{\Delta n^{2/3}}{4m^{1/3}}$-rich, the lines from $L_1^*$ must determine at least 
$$|L_1^*|\frac{\Delta n^{2/3}}{4m^{1/3}} \ge |L_\mathcal{D}|\frac{\Delta n^{2/3}}{4m^{1/3}} \ge \frac{\Delta k C_3}{8}m$$ 
incidences. If we throw away points that are on at most $1$ line from $L_1^*$, we can be throwing away at most $n \le m$ incidences. Thus, if we let $P^*$ denote the set of points from $P'$ on at least two lines from $L_1^*$, and let $k$ be sufficiently large, then we know that $L_1^*$ and $P^*$ determine at least $|L_1^*|\frac{\Delta n^{2/3}}{8m^{1/3}}$ incidences. By \cref{thm:szt}, we see
$$|L_1^*|\frac{\Delta n^{2/3}}{8m^{1/3}} \le|I(P^*,L_1^*)| \lesssim |P^*|^{2/3}|L_1^*|^{2/3} + |P^*| + |L_1^*|, $$
from which we conclude that
\begin{align}
|P^*| \gtrsim \min\left( |L_1^*|\frac{ n^{2/3}}{m^{1/3}},|L_1^*|^{1/2}\frac{n}{m^{1/2}}\right)\gtrsim_{C_3} k^{1/2}m^{1/6}n^{2/3}\label{plineq}
\end{align}
as we are assuming $m\geq n$.

In the next step, we let $L_2^* \subset L'$ be the set of lines through at least two points from $P^*$. Since each point from $P^*$ is $\frac{\Delta m^{2/3}}{4n^{1/3}}$-rich, we note that $P^*$ and $L'$ determine at least $$|P^*|\frac{\Delta m^{2/3}}{4n^{1/3}} \gtrsim_{C_3} k^{1/2}m^{5/6}n^{1/3} \geq k^{1/2}m$$ 
incidences. Discarding lines not in $L_2^*$, we lose at most $m$ incidences, and thus $P^*$ and $L_2^*$ determine at least $|P^*|\frac{\Delta m^{2/3}}{8n^{1/3}}$ incidences as long as $k$ is sufficiently large with respect to $C_3$. 
Now we run the argument for \cref{plineq} again but with the roles of points and lines reversed.
We see
$$|P^*|\frac{\Delta m^{2/3}}{8n^{1/3}} \le|I(P^*,L_2^*)| \lesssim |P^*|^{2/3}|L_2^*|^{2/3} + |P^*| + |L_2^*|. $$ 
Note that if $|L_2^*|\gtrsim |P^*|\frac{m^{2/3}}{n^{1/3}}$, then we get imediately that $|L_2^*| \gtrsim m$, and we are done.
Therefore it suffices to consider the case where $|P^*|^{2/3}|L_2^*|^{2/3}\gtrsim |P^*|\frac{m^{2/3}}{n^{1/3}}$, showing that 
\begin{align}|L_2^*| \gtrsim |P^*|^{1/2}\frac{m}{n^{1/2}}\gtrsim \min\left(|L_1^*|^{1/2}\frac{m^{5/6}}{n^{1/6}},|L_1^*|^{1/4}m^{3/4}\right)\label{finalineq}\end{align}
where for the final inequality, we used \cref{plineq}. Finally, since $L_1^*$ consists of all lines that are recoverable from $L_\mathcal{D}$ and the incidence structure of $P'$ and $L'$, we know $|L_2^*| \le |L_1^*|$. Combining this with \cref{finalineq}, we get $|L_1^*| \gtrsim m$ or $|L_1^*|\gtrsim \frac{m^{5/3}}{n^{1/3}}\geq m$, which gives us $|L_1^*|\gtrsim m$ regardless.
Using \cref{plineq}, we get $|P^*| \gtrsim n$. Note that the points in $P^*$ are uniquely determined by the points $P_\mathcal{D}$ along with the incidence structure of $P$ and $L$. Recall that $P_D$ is $r_0$-rigid for every $D\in \mathcal{D}$ for a constant $r_0$ depending on $\Delta$ and $C_2$. Combining this with \cref{prop:fullrigid} gives us that $P^*$ is $kr_0$-rigid. This completes the proof of \cref{thm:main}.
\\\\
{\bfseries Acknowledgements:}

\medskip

\noindent The authors would like to thank Zeev Dvir for helpful conversations. The research of the first author was supported in part by a Killam Doctoral Scholarship and a Four-Year Fellowship from the University of British Columbia. The research of the second author was supported in part by an NSERC Discovery grant and OTKA K 119528 grant.



\end{proof}

\bibliographystyle{abbrv}
\bibliography{Full_bib}
\end{document}